\def\ni{\noindent}
\def\N{\mathbb{N}}
\def\cP{\mathscr{P}}
\def\f{f^{\ast}}
\newtheorem{thm}{Theorem}[section]
\newtheorem{cor}[thm]{Corollary}
\newtheorem{defn}{Definition}[section]
\newtheorem{prob}{Problem}
\newtheorem{prop}[thm]{Proposition}
\title{\textbf{\sc A Study on the Product Set-Labeling of Graphs}}
\author{Sudev Naduvath}
\affil{\small Centre for Studies in Discrete Mathematics\\ Vidya Academy of Science \& Technology \\ Thalakkottukara P. O., \\ Thrissur - 680501, Kerala, India.\\ {\tt sudevnk@gmail.com}}
\date{}
\begin{document}
\maketitle

\begin{abstract}
Let $X$ be a non-empty ground set and $\cP(X)$ be its power set. A set-labeling (or a set-valuation) of a graph $G$ is an injective set-valued function $f:V(G)\to \cP(X)$ such that the induced function $\f:E(G) \to \cP(X)$ is defined by $\f(uv)=f(u)\ast f(v)$, where $f(u)\ast f(v)$ is a binary operation of the sets $f(u)$ and $f(v)$. A graph which admits a set-labeling is known to be a  set-labeled graph. A set-labeling $f$ of a graph $G$ is said to be a set-indexer of $G$ if the associated function $\f$ is also injective.  In this paper, we introduce a new notion namely product set-labeling of graphs as an injective set-valued function $f:V(G)\to \cP(\N)$ such that the induced edge-function $\f:V(G)\to \cP(\N)$ is defined as $\f(uv)=f(u)\ast f(v) \forall\ uv\in E(G)$, where $f(u)\ast f(v)$ is the product set of the set-labels $f(u)$ and $f(v)$, where $\N$ is the set of all positive integers and discuss certain properties of the graphs which admit this type of set-labeling.
\end{abstract}

\vspace{0.2cm}

\ni \textbf{Key words}: Set-labeling of graphs; product set-labeling of graphs; uniform product set-labeling of graphs; geometric product set-labeling of graphs.

\vspace{0.04in}

\ni \textbf{Mathematics Subject Classification}: 05C78.

\section{Introduction}

For all  terms and definitions, not defined specifically in this paper, we refer to \cite{BM1,JAG, FH,DBW}. Unless mentioned otherwise, all graphs considered here are simple, finite, undirected and have no isolated vertices.

Let $X$ be a non-empty set and $\cP(X)$ be its power set. A {\em set-labeling} (or a \textit{set-valuation}) of a graph $G$ is an injective function $f:V(G)\to \cP(X)$ such that the induced function $f^{\oplus}:E(G)\to \cP(X)$ is defined by $f^{\oplus}(uv)=f(u)\oplus f(v)~ \forall ~ uv\in E(G)$, where $\oplus$ is the symmetric difference of two sets.  A graph $G$ which admits a set-labeling is called a {\em set-labeled graph} (or a set-valued graph)(see \cite{BDA1}).  

A {\em set-indexer} of a graph $G$ is an injective function $f:V(G)\to \cP(X)$ such that the induced function $f^{\oplus}:E(G) \to \cP(X)$ is also injective. A graph $G$ which admits a set-indexer is called a {\em set-indexed graph} (see \cite{BDA1}).

Several types of set-valuations of graphs have been introduced in later literature and the properties and structural characteristics of such set-valued graphs have been studied in a rigorous manner. A relevant and important set-labeling in this context is the integer additive set-labeling of graphs which is defined as an injective set-valued function $f:V(G)\to \cP(X)$ such that the induced edge function $f^+:E(G)\to \cP(X)$ is defined by $f^+(uv)=f(u)+f(v) \forall\ uv\in E(G)$, where $X$ is a non-empty set of non-negative integers and $f(u)+f(v)$ is the sumset of the set-labels $f(u)$ and $f(v)$. Certain types of integer additive set-labeled graphs are studied in \cite{GS1,GS2,GS3,GS4}.

Motivated by these studies on different types of set-valuations of graphs, in this paper, we introduce a new type of set-labeling, namely product set-labeling of graphs and study the properties and characteristics of the graphs which admit this type of set-labeling.

\section{Product Set-Labeling of Graphs}

Let $A$ and $B$ be two sets of integers. Then, the \textit{product set} of $A$ and $B$, denoted by $A\ast B$, is the set defined by $A\ast B=\{ab: a\in A, b\in B\}$. Note that $A\ast \emptyset =\emptyset$ and $A\ast \{0\}=\{0\}$. Also, if either $A$ or $B$ is a countably infinite set, then their product set is also countably infinite. In view of these facts, we restrict our studies to the non-empty finite sets of positive integers.

Analogous to the corresponding results on sumsets of sets of integers (see \cite{MBN}), we have the following result on the cardinality of the product set of two sets of positive integers.

\begin{thm}\label{Thm-1}
If $A$ and $B$ are two non-empty finite sets of positive integers, then $|A|+|B|-1\le |A\ast B|\le |A|\,|B|$.
\end{thm}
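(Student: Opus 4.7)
The plan is to mimic the familiar one-line argument for the corresponding sumset bound, exploiting the fact that multiplication by a positive integer preserves strict inequalities.

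For the upper bound, I would simply note that $A\ast B$ is the image of the map $A\times B\to\N$ sending $(a,b)\mapsto ab$, so $|A\ast B|\le |A|\,|B|$.

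For the lower bound, I would write $A=\{a_1<a_2<\cdots<a_m\}$ and $B=\{b_1<b_2<\cdots<b_n\}$ in increasing order, where $m=|A|$ and $n=|B|$. Since all entries are positive integers, the chain
\[
a_1b_1 \;<\; a_1b_2 \;<\; \cdots \;<\; a_1b_n \;<\; a_2b_n \;<\; a_3b_n \;<\; \cdots \;<\; a_mb_n
\]
consists of $m+n-1$ strictly increasing (hence pairwise distinct) elements of $A\ast B$: within the first $n$ terms we fix $a_1>0$ and use $b_j<b_{j+1}$, while in the remaining terms we fix $b_n>0$ and use $a_i<a_{i+1}$. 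This furnishes $|A\ast B|\ge m+n-1=|A|+|B|-1$.

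There is really no obstacle here; the only substantive point is the positivity hypothesis, which is what guarantees that the two monotonic steps above are strict. (If $0$ were allowed in $A$ or $B$, the construction would collapse, since $0\cdot b_j$ is constant in $j$.) I would close by remarking that both bounds are tight, attained for instance by geometric progressions (lower bound) and by generic sets such as $\{1,2,3,\ldots,n\}$ paired suitably (upper bound), although establishing sharpness is not required by the statement.
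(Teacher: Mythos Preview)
Your argument is correct and is the standard one: the upper bound is immediate from the surjection $A\times B\to A\ast B$, and the lower bound follows from the strictly increasing chain of $m+n-1$ products you exhibit, whose strictness relies precisely on positivity. Note, however, that the paper does not actually supply its own proof of this theorem; it merely states the result as the multiplicative analogue of the sumset inequality, with a reference to Nathanson~\cite{MBN}. So there is nothing to compare against beyond observing that your proof is exactly the product-set translation of the sumset argument the paper is alluding to.

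One small inaccuracy in your closing remark: the set $\{1,2,\ldots,n\}$ is not a good witness for the upper bound, since products from such sets collide (for instance $2\cdot 6=3\cdot 4$). If you want an example where $|A\ast B|=|A|\,|B|$, take $A$ and $B$ to be disjoint sets of primes, or more generally sets whose quotient sets are disjoint in the sense of Theorem~\ref{thm-3}. As you say, though, sharpness is not required by the statement.
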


\begin{thm}\label{Thm-2}
For any two sets $A$ and $B$ of positive integers, $|A\ast B|=|A|+|B|-1$ if and only if $A$ and $B$ are geometric progressions with the same common ratio.
\end{thm}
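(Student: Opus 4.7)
The plan is to reduce the multiplicative problem to its classical additive analogue by passing to logarithms. Since $A$ and $B$ consist of positive integers, the sets $A' = \{\log a : a \in A\}$ and $B' = \{\log b : b \in B\}$ are well-defined finite subsets of $\mathbb{R}$ with $|A'| = |A|$ and $|B'| = |B|$, and the identity $\log(ab) = \log a + \log b$ produces a bijection between the product set $A \ast B$ and the real sumset $A' + B'$. Consequently $|A \ast B| = |A' + B'|$, and $A$ (resp.\ $B$) is a geometric progression with common ratio $r$ if and only if $A'$ (resp.\ $B'$) is an arithmetic progression with common difference $\log r$.

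Under this correspondence the statement becomes the classical equality case of the sumset inequality for reals: $|A' + B'| = |A'| + |B'| - 1$ if and only if $A'$ and $B'$ are arithmetic progressions sharing a common difference, a result in the spirit of the sumset material cited from \cite{MBN}. Translating back through exponentiation, equal common differences on the additive side become equal common ratios on the multiplicative side, which is exactly the required characterisation.

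The ``if'' direction admits a transparent direct verification without invoking the additive result: if $A = \{\alpha r^i : 0 \le i \le m-1\}$ and $B = \{\beta r^j : 0 \le j \le n-1\}$, then $A \ast B = \{\alpha\beta r^k : 0 \le k \le m + n - 2\}$, which has cardinality exactly $m + n - 1$ (the trivial cases $|A| = 1$ or $|B| = 1$ being immediate).

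The main obstacle, should one wish to avoid importing the additive equality case, is the ``only if'' direction. Ordering $A = \{a_1 < \cdots < a_m\}$ and $B = \{b_1 < \cdots < b_n\}$, one examines the two strictly increasing chains
\[
a_1 b_1 < a_1 b_2 < \cdots < a_1 b_n < a_2 b_n < \cdots < a_m b_n
\]
and
\[
a_1 b_1 < a_2 b_1 < \cdots < a_m b_1 < a_m b_2 < \cdots < a_m b_n,
\]
each of length $m + n - 1$. The hypothesis $|A \ast B| = m + n - 1$ forces both chains to exhaust $A \ast B$, and a termwise comparison of the two forces the consecutive ratios within $A$ and within $B$ to be equal and identical across the two sets. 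Carrying out this combinatorial bookkeeping directly is the delicate part; the logarithmic reduction bypasses it cleanly by deferring to the better-known additive characterisation.
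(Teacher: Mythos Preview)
The paper does not supply a proof of this theorem: together with Theorem~\ref{Thm-1} it is simply asserted as the multiplicative analogue of the classical sumset bounds and their equality case, with a reference to \cite{MBN}. Your logarithmic reduction is exactly the device that converts that analogy into an actual proof, and it is correct: $\log$ carries $A\ast B$ bijectively onto the real sumset $A'+B'$, and the equality case $|A'+B'|=|A'|+|B'|-1$ for finite subsets of $\mathbb{R}$---established by the very two-chain argument you sketch, which requires only a total order compatible with addition---forces $A'$ and $B'$ to be arithmetic progressions with a common difference, hence $A$ and $B$ to be geometric progressions with a common ratio. Your direct verification of the ``if'' direction is also correct. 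One minor point worth flagging explicitly is the degenerate case $|A|=1$ or $|B|=1$: equality then holds trivially and a singleton counts as a geometric progression with any prescribed ratio, so both directions remain valid; the paper's statement leaves this implicit as well.
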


\ni Using the above mentioned concepts of product sets of sets of positive integers, we introduce the notion of the product set-labeling of a graph as given below.

\begin{defn}{\rm 
Let $\N$ be the set of all positive integers and $\cP(\N)$ be its power set. The \textit{product set-labeling} of a graph $G$ is an injective set-valued function $f:V(G)\to \cP(\N)$ such that the induced edge-function $\f:V(G)\to \cP(\N)$ is defined as $\f(uv)=f(u)\ast f(v) \forall\ uv\in E(G)$, where $f(u)\ast f(v)$ is the product set of the set-labels $f(u)$ and $f(v)$.  A graph $G$ which admits a product set-labeling is called a \textit{product set-labeled graph}. }
\end{defn}

\begin{defn}{\rm 
A \textit{product set-labeling} $f:V(G)\to \cP(\N)$ of a graph $G$ is said to be a \textit{product set-indexer} if the induced edge-function $\f:V(G)\to \cP(\N)$ defined by $\f(uv)=f(u)\ast f(v) \forall\ uv\in E(G)$ is also an injective function.  }
\end{defn}

The cardinality of the set-label of an element (a vertex or an edge) of $G$ is called \textit{label size} of that element. A product set-labeling $f$ of a graph $G$ is said to be a \textit{uniform product set-labeling} if all edges of $G$ have the same label size under $f$. In particular, a product set-labeling $f$ of a graph $G$ is said to be \textit{$k$-uniform} if $|\f(uv)|=k\ \forall\, uv\in E(G)$.

\vspace{0.25cm}

In view of Theorem \ref{Thm-1}, the bounds for the label size of edges of a product set-labeled graph $G$ is given by
\begin{equation}
|f(u)|+|f(v)|-1\le |\f(uv)|=|f(u)\ast f(v)|\le |f(u)|\,|f(v)|\ \forall\ uv\in E(G). \label{eqn-1}
\end{equation}

The product set-labelings which satisfy the bounds of this inequality are of special interest. If the cardinality of the vertex set-labels of $G$ under a product set-labeling $f$ attains the upper bound of the inequality \eqref{eqn-1}, then $f$ is called a \textit{strong product set-labeling} of $G$. Before proceeding to investigate the conditions for the existence of a strong product set-labeling, we require the following notion.

\begin{defn}{\rm 
Let $A$ be a non-empty set of positive integers. Then the \textit{quotient set} of the set $A$, denoted by $Q_A$, is defined as $Q_A=\{\frac{a}{b}:a,b\in A, a\ge b\}$. That is, $Q_A$ is the set of all rational numbers, greater than or equal to $1$, which is formed by the elements of the set $A$.}
\end{defn}

In view of this notion, we establish a necessary and sufficient condition for a graph $G$ to admit a strong product set-labeling in the following theorem.

\begin{thm}\label{thm-3}
A product set-labeling $f$ of a graph $G$ is a strong product set-labeling if and only if the quotient sets of the set-labels of any pair of adjacent vertices of $G$ are disjoint.
\end{thm}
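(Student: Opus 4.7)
The plan is to reduce strength to an edge-by-edge algebraic condition on the set-labels, and then to translate that condition into the language of quotient sets.

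First, by Inequality \eqref{eqn-1}, $f$ is a strong product set-labeling precisely when
\[
|\f(uv)| = |f(u)\ast f(v)| = |f(u)|\,|f(v)| \quad \forall\, uv\in E(G).
\]
For fixed adjacent vertices $u,v$, write $A=f(u)$ and $B=f(v)$. The natural surjection $\pi\colon A\times B \to A\ast B$, $(a,b)\mapsto ab$, always satisfies $|A\ast B|\le |A|\,|B|$, with equality iff $\pi$ is injective. So the task reduces to characterising, for positive-integer sets $A,B$, when $\pi$ is injective in terms of $Q_A$ and $Q_B$.

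Next I would translate non-injectivity of $\pi$ into a ratio statement. A collision means there exist $a_1,a_2\in A$ and $b_1,b_2\in B$ with $(a_1,b_1)\ne(a_2,b_2)$ and $a_1b_1=a_2b_2$. Positivity forces $a_1\ne a_2$ and $b_1\ne b_2$; after relabelling, assume $a_1>a_2$, so $b_2>b_1$ and
\[
\frac{a_1}{a_2}=\frac{b_2}{b_1} \in Q_A\cap Q_B,
\]
with the common value strictly greater than $1$. Conversely, any rational $r>1$ in $Q_A\cap Q_B$ arises as $a_1/a_2 = b_2/b_1$ for suitable elements, and then $a_1b_1=a_2b_2$ is a nontrivial collision, so $\pi$ fails to be injective. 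Hence $\pi$ is injective iff $Q_A$ and $Q_B$ share no common element exceeding $1$.

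Finally, quantifying over all edges $uv\in E(G)$ assembles these pointwise equivalences into the global statement of the theorem. The main subtlety — rather than a genuine obstacle — is that $1=a/a$ lies in every quotient set, so the word \emph{disjoint} in the theorem must be interpreted as \emph{meeting only in the trivial value~$1$} (equivalently, one restricts attention to ratios strictly greater than $1$, which are the only ratios relevant to collisions). Once this convention is fixed, the two implications above are essentially routine bookkeeping with the ratio identification.
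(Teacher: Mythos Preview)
Your argument is correct and follows essentially the same route as the paper: both reduce strength on an edge $uv$ to the absence of collisions $a_1b_1=a_2b_2$ and then rewrite such a collision as a common ratio $\tfrac{a_1}{a_2}=\tfrac{b_2}{b_1}$ lying in both quotient sets. Your observation that $1$ lies in every $Q_A$, so that ``disjoint'' must be read as ``sharing no ratio exceeding $1$'', is an improvement over the paper, which overlooks this point.
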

\begin{proof}
Let $f$ be a product set-labeling of a graph $G$ and let $u$ and $v$ be any two adjacent vertices in $G$.

First, assume that $f$ is a strong product set-labeling of $G$. Then, we have $|f(u)\ast f(v)|= |f(u)|\,|f(v)| \forall\ uv\in E(G)$. This is possible only when $ac\ne bd$ for any two distinct elements $a,b\in f(u)$ and any two distinct elements $c,d\in f(v)$. That is, $\frac{a}{b}\ne \frac{c}{d}$. Since $\frac{a}{b} \in Q_{f(u)}$ and $\frac{c}{d} \in Q_{f(v)}$, we have $Q_{f(u)}\cap Q_{f(v)}=\emptyset$. 

If possible let $f$ is not a strong product set-labeling of $G$. Then $|f(u)\ast f(v)|< |f(u)|\,|f(v)|$ for some $uv\in E(G)$. That is, there exist at least two elements $a,b\in f(u)$ and at least two elements $c,d\in f(v)$ such that $ac=bd$. That is, $\frac{a}{b}=\frac{c}{d}$. Hence, $Q_{f(u)}\cap Q_{f(v)}\ne \emptyset$. This completes the proof.
\end{proof}

By saying that a set is a geometric progression, we mean that the elements of that set is in geometric progression. If the context is clear, the common ratio of the set-label of an element (a vertex or an edge) in $G$ may be called the \textit{common ratio of that element}.  

In view of Theorem \ref{Thm-2}, we also note that the vertex set-labels in $G$, under the product set-labeling $f$ which attains the lower bound the inequality \eqref{eqn-1}, are geometric progressions having the same common ratio. This fact creates much interest in investigating the set-labels of the elements of $G$ which are geometric progressions. Hence we have the following notion.

\begin{defn}{\rm 
A product set-labeling $f:V(G)\to \cP(\N)$ of a graph $G$ is said to be a \textit{geometric product set-labeling} if the set-labels of all elements (vertices and edges) of $G$ with respect to $f$ are geometric progressions.}
\end{defn}

The following theorem discusses the conditions required for a product set-labeling $f$ of a graph $G$ to be a geometric product set-labeling of $G$.

\begin{thm}\label{Thm-3}
A product set-labeling $f:V(G)\to \cP(\N)$ of a graph $G$ is a geometric product set-labeling of $G$ if and only if for every edge of $G$, the common ratio of one end vertex is a positive integral power of the common ratio of the other end vertex, where this power is less than or equal to the label size of the end vertex having smaller common ratio. 
\end{thm}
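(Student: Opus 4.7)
The plan is to parameterise the vertex labels as geometric progressions and reduce the geometric-progression condition on their product set to a combinatorial condition on a set of exponents. Fix an edge $uv$ and write $f(u) = \{a, ar, \ldots, ar^{m-1}\}$ and $f(v) = \{b, bs, \ldots, bs^{n-1}\}$, each ordered by magnitude so that $r, s > 1$ (the degenerate case in which a label has size $1$ is immediate, since the product set is then automatically a GP). By symmetry I may assume $r \le s$, so that $m$ is the label size of the end-vertex with the smaller common ratio. Every element of $f(u)\ast f(v)$ takes the form $ab\, r^i s^j$ with $0 \le i \le m-1$ and $0 \le j \le n-1$.

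For the necessity, suppose $f(u)\ast f(v)$ is a geometric progression with common ratio $t$. Its two smallest (and hence consecutive) elements are $ab$ and $abr$, forcing $t = r$. Thus every $ab\, r^i s^j$ equals $ab\, r^{e}$ for some non-negative integer $e$, and specialising to $(i,j) = (0,1)$ gives $s = r^k$ for an integer $k \ge 1$ (positive because $s > 1$ and $r > 1$). The exponent set
\[
E = \{\, i + jk : 0 \le i \le m-1,\ 0 \le j \le n-1\,\}
\]
must then equal the full interval $[0,\ (m-1)+(n-1)k]$ of non-negative integers, since $\{ab\, r^e : e \in E\}$ can only be a GP in ratio $r$ when the exponents in $E$ are consecutive. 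In particular $m \in E$, and the equation $m = i + jk$ with $i \le m-1$ forces $j \ge 1$, whence $k \le m$.

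For the sufficiency, assume $s = r^k$ with $1 \le k \le m$. For each fixed $j \in \{0, \ldots, n-1\}$, as $i$ ranges over $\{0, \ldots, m-1\}$ the value $i + jk$ sweeps out the block $[jk,\ jk + m - 1]$. Because $k \le m$, consecutive blocks overlap or abut, so $\bigcup_{j=0}^{n-1} [jk,\ jk + m - 1] = [0,\ (m-1) + (n-1)k]$. Hence $f(u)\ast f(v) = \{ab\, r^e : 0 \le e \le (m-1)+(n-1)k\}$ is a geometric progression with common ratio $r$, so applied at every edge this gives a geometric product set-labeling.

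The main obstacle is the translation between the algebraic GP condition on the product set and the combinatorial interval condition on $E$; once this translation is in hand, both directions reduce to elementary arithmetic about integer intervals, with the bound $k \le m$ emerging precisely from requiring the exponent $m$ itself to be representable as $i + jk$ with $i \le m-1$.
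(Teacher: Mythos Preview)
Your proof is correct and rests on the same underlying decomposition as the paper: both view $f(u)\ast f(v)$ as a union of ``rows'' $f(u)\ast\{b_j\}$, which in your exponent language are the blocks $[jk,\,jk+m-1]$, and both identify the geometric-progression condition with these blocks overlapping or abutting. The sufficiency arguments are essentially identical. Where you diverge is in the necessity direction: the paper proceeds by contrapositive case analysis (treating separately the cases $r_v=(r_u)^k$ with $k$ too large, and $r_v$ not a power of $r_u$), whereas you argue directly by first pinning the common ratio of the product set to $r$ via its two smallest elements, deducing $s=r^k$, and then extracting $k\le m$ from the requirement that the exponent $m$ be representable in $E$. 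Your route is tighter and avoids a minor slip in the paper's final case (where it asserts that $A_j$ and $A_{j+1}$ have ``different common ratios'', though in fact both have ratio $r_u$); on the other hand, the paper's row-by-row picture makes the geometric structure of the product set slightly more visible.
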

\begin{proof}
Let $f$ be a product set-labeling of a graph $G$ under which every vertex set-label is a geometric progression and let $u$ and $v$ be any two adjacent vertices of $G$. Let $r_u$ and $r_v$ be the common ratios of $u$ and $v$ respectively such that $r_u\le r_v$. Let the set-labels of $u$ and $v$ be given by $f(u)=\{a_i=a(r_u)^{i-1}:a\in \N; 0\le i\le |f(u)|=m\}$ and $f(u)=\{b_j=b(r_v)^{j-1}:b\in \N; 0\le j\le |f(v)|=n\}$. Now, consider the following sets.  
\vspace{-0.75cm}

\begin{alignat*}{4}
A_0 & = & f(u)\ast \{b_0\} & = & \{ab, abr_u, abr_u^2, \ldots, ab(r_u)^{m-1}\}\\
A_1 & = & f(u)\ast \{b_1\} & = & \{abr_v, abr_ur_v, abr_u^2r_v, \ldots, ab(r_u)^{m-1}r_v\}\\
\vdots & & \vdots & & \vdots\\ 
A_j & = & f(u)\ast \{b_j\} & = & \{abr_v^{j}, abr_ur_v^{j}, abr_u^2r_v^{j}, \ldots, ab(r_u)^{m-1}r_v^{j}\}\\
\vdots & & \vdots & & \vdots\\
A_{n-1} & = & f(u)\ast \{b_{n-1}\} & = & \{abr_v^{n-1}, abr_ur_v^{n-1}, abr_u^2r_v^{n-1}, \ldots, ab(r_u)^{m-1}r_v^{n-1}\}
\end{alignat*}

Here we can see that $\f(uv)=\bigcup\limits_{j=0}^{n-1} A_j$. 

Now assume that $r_v=(r_u)^k$, for some positive integer $k\le |f(u)|=m$. Then, either some of the initial elements of the set $A_{j+1}$ coincides with some final elements of $A_j$ or the ratio between the first element of $A_{j+1}$ and the final element of $A_j$ is $r_u$, for $0\le j\le n-1$. In both cases $A_j\cup A_{j+1}$ is a geometric progression for all $0\le j\le n-1$. Hence $\f(uv)$ is a geometric progression for all edge $uv\in E(G)$ and hence $f$ is a geometric product set-labeling of $G$.

If $r_v=(r_u)^k$ and $k\ge |f(u)|$, then for $0\le j\le n-1$, we note the following facts 

\begin{enumerate}\itemsep0mm
	\item[(i)] $A_j$ and $A_{j+1}$ are geometric progressions with the same common difference $r_u$,
	\item[(ii)] $A_j\cap A_{j+1}=\emptyset$,
	\item[(iii)] $A_j\cup A_{j+1}$ is not a geometric progression, as the ratio between the first element of $A_{j+1}$ and the final element of $A_j$ is not equal to $r_u$.
\end{enumerate}
Therefore, in this case, $\f(uv)$ is not a geometric progression and hence $f$ is not a geometric product set-labeling of $G$.

Now consider the case that $r_v\ne (r_u)^k$ for any positive integer $k$. Then, $A_j$ and $A_{j+1}$ are geometric progressions with different common ratios and hence it is clear that $A_j\cup A_{j+1}$ is not a geometric progression. Therefore, in this case also, $\f(uv)$ is not a geometric progression and hence $f$ is not a geometric product set-labeling of $G$. This compltes the proof.
\end{proof}

The following result describes a necessary and sufficient condition for a complete graph to admit a geometric product set-labeling.

\begin{cor}\label{Cor-3}
A complete graph $K_n$ admits a geometric product set-labeling if and only if the common ratio of every vertex is either an integral power or a root of the common ratios of all other vertices of $K_n$.
\end{cor}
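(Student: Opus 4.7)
The plan is to derive the corollary as a direct consequence of Theorem \ref{Thm-3} by applying it to every pair of vertices in $K_n$. The crucial structural fact about $K_n$ is that every two vertices are adjacent, so the per-edge condition of Theorem \ref{Thm-3} must hold simultaneously for every unordered pair of vertices. This reduces the problem to a simple translation between the two formulations.

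For the necessity direction, I would assume that $f$ is a geometric product set-labeling of $K_n$ and pick an arbitrary pair of vertices $u,v$. Since $uv\in E(K_n)$, Theorem \ref{Thm-3} applies: after relabeling so that $r_u\le r_v$, we obtain $r_v=r_u^{k}$ for some positive integer $k$. Writing the same equation as $r_u=r_v^{1/k}$ shows that $r_u$ is a root of $r_v$ whenever $r_v$ is an integral power of $r_u$, and vice versa. Thus between any two vertices of $K_n$ one common ratio is an integral power or a root of the other, as required.

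For the sufficiency direction, I would assume that for every pair of vertices $u,v$ of $K_n$ the common ratio of one is a positive integral power of the other (the "root" formulation being the same statement with the roles interchanged). Then for every edge $uv$ the hypothesis of Theorem \ref{Thm-3} is met, so $\f(uv)$ is a geometric progression. Combined with the standing assumption that all vertex set-labels are geometric progressions, this shows that every element of $K_n$ carries a geometric progression as its set-label, so $f$ is a geometric product set-labeling.

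The main subtlety I expect is the exponent bound $k\le |f(u)|$ appearing in Theorem \ref{Thm-3} but suppressed in the statement of the corollary. The cleanest way to handle this in the sufficiency proof is to remark that the condition on common ratios is purely arithmetic, and that for any prescribed system of ratios satisfying the divisibility/root condition one may choose the cardinalities of the vertex set-labels large enough to dominate all the exponents $k$ that arise, so that the size constraint of Theorem \ref{Thm-3} is automatically satisfied. I would include a short sentence to this effect to keep the corollary faithful to the underlying theorem.
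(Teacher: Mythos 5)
Your proposal is correct and follows the same route as the paper, which simply observes that every pair of vertices of $K_n$ is adjacent and invokes Theorem \ref{Thm-3} for each edge. Your extra remark about the exponent bound $k\le |f(u)|$ is a sensible clarification that the paper's one-line proof leaves implicit, but it does not change the argument.
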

\begin{proof}
Since any two vertices in $K_n$ are adjacent to each other, the proof follows as an immediate consequence of Theorem \ref{Thm-3}.
\end{proof}

The \textit{characteristic index} of an edge $e=uv$ of a product set-labeled graph $G$ is the number $k\ge 1$, such that $r_v=(r_u)^k$, where $r_u$ and $r_v$ are the common ratios of the set-labels of the vertices $u$ and $v$ (or equivalently, the common ratios of $u$ and $v$) respectively.  

The following result discusses the label size of the edges of a graph $G$ which admits a geometric product set-labeling. 

\begin{prop}\label{Prop-3}
Let $f$ be a geometric product set-labeling of a graph $G$ and let $u$ and $v$ be two adjacent vertices in $G$ with the common ratios $r_u$ and $r_v$ such that $r_u\le r_v$. Then, the label size of the edge $uv$ is given by $|\f(uv)|=|f(u)|+k\left(|f(v)|-1\right)$, where $k$ is the characteristic index of the edge $uv$.
\end{prop}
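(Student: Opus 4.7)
The plan is to leverage directly the set-up already used in the proof of Theorem \ref{Thm-3}. Write $f(u)=\{a r_u^{i}:0\le i\le m-1\}$ and $f(v)=\{b r_v^{j}:0\le j\le n-1\}$, where $m=|f(u)|$, $n=|f(v)|$ and $r_v=r_u^{k}$ with $k\le m$ (this is the hypothesis that $f$ is a geometric product set-labeling, by Theorem \ref{Thm-3}). Then
\[
\f(uv)=\{a b\, r_u^{i}\,r_v^{j}:0\le i\le m-1,\ 0\le j\le n-1\}=\{a b\, r_u^{i+jk}:0\le i\le m-1,\ 0\le j\le n-1\}.
\]

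The main step is to show that the set of exponents $E=\{i+jk:0\le i\le m-1,\ 0\le j\le n-1\}$ is exactly the integer interval $\{0,1,2,\ldots,m-1+k(n-1)\}$. Because $a b\,r_u^{t}\ne a b\,r_u^{t'}$ for $t\ne t'$ (as $r_u>1$, or trivially if $r_u=1$ a separate easy sub-case), the cardinality of $\f(uv)$ equals $|E|$. For the interval identification, I would argue in two directions: every element of $E$ lies between $0$ and $m-1+k(n-1)$ by inspection, and conversely, given any integer $t$ in that range, writing $t=jk+i$ by division with remainder where $0\le i<k$, one checks using $k\le m$ that either this decomposition already lies in $E$, or the corresponding slot is covered because consecutive ``strips'' $A_j=f(u)\ast\{b r_v^{j}\}$ and $A_{j+1}$ overlap in their last/first $m-k$ elements, so no integer between $0$ and $m-1+k(n-1)$ is missed.

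With $|E|=m-1+k(n-1)+1=m+k(n-1)$, the conclusion $|\f(uv)|=|f(u)|+k(|f(v)|-1)$ follows immediately.

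The only mild obstacle is the bookkeeping of the overlaps between the $A_j$'s; this is really the same observation noted at the end of the proof of Theorem \ref{Thm-3}, where it is shown that $A_j\cup A_{j+1}$ is a geometric progression with ratio $r_u$. A cleaner presentation is to state that $\f(uv)$ is a geometric progression with first term $ab$, common ratio $r_u$ and last term $a b\, r_u^{m-1}r_v^{n-1}=a b\, r_u^{m-1+k(n-1)}$, so its cardinality is $(m-1+k(n-1))-0+1=|f(u)|+k(|f(v)|-1)$, which is exactly the desired formula.
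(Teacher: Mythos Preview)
Your proposal is correct and follows essentially the same route as the paper: rewrite $\f(uv)$ as $\{ab\,r_u^{\,i+kj}\}$ and read off the cardinality from the exponent set. In fact you supply more justification than the paper does---the paper simply asserts $|\f(uv)|=m+k(n-1)$ from the exponent description without arguing that $\{i+kj\}$ fills the whole interval $\{0,\ldots,m-1+k(n-1)\}$, whereas your final ``cleaner presentation'' (smallest term $ab$, largest term $ab\,r_u^{\,m-1+k(n-1)}$, common ratio $r_u$) makes this explicit.
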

\begin{proof}
Since $f$ is a geometric product set-labeling of $G$, by Theorem \ref{Thm-3}, for any adjacent vertices $u$ and $v$ with the common ratios $r_u$ and $r_v$ such that $r_u\le r_v$, we have $r_v=(r_u)^k$, where $k$ is a positive integer less than or equal to $|f(u)|$. Let $f(u)=\{ar_u^{i-1}: a\in \N; 0\le i\le |f(u)|\}$ and $f(v)=\{br_v^{j-1}: b\in \N; 0\le j\le |f(v)|\}$. Then, we have

\vspace{-0.75cm}

\begin{eqnarray*}
\f(uv)& = & \{abr_u^{i-1}r_v^{j-1}: a,b\in \N; 0\le i\le m, 0\le j\le n\}\\
& = & \{abr_u^{i-1}{(r_u)^k}^{j-1}: a,b\in \N; 0\le i\le m, 0\le j\le n\}\\
& = & \{abr_u^{(i-1)+k(j-1)}: a,b\in \N; 0\le i\le m, 0\le j\le n\}.
\end{eqnarray*}
Therefore, $|\f(uv)|=m+k(n-1)=|f(u)|+k\left(|f(v)|-1\right)$.
\end{proof}

The following theorem describes a necessary and sufficient condition for a geometric product set-labeling of a graph $G$ to be a strong product set-labeling of $G$.

\begin{thm}\label{Thm-4}
A geometric set-labeling $f$ of a graph $G$ is a strong product set-labeling of $G$ if and only if the characteristic index of every edge of $G$ is equal to the label size of its end vertex having smaller common ratio.
\end{thm}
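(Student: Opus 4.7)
The plan is to treat this theorem as a direct corollary of Proposition \ref{Prop-3}, since that proposition already gives an explicit formula for $|\f(uv)|$ under a geometric product set-labeling. The statement is essentially a two-line algebraic comparison once that formula is in hand.

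First I would fix the set-up: let $uv\in E(G)$, let $r_u, r_v$ be the common ratios of $u,v$ with $r_u\le r_v$, and let $k$ be the characteristic index of $uv$, so that $r_v=(r_u)^k$. Write $m=|f(u)|$ and $n=|f(v)|$. By Theorem \ref{Thm-3}, $k$ is a positive integer with $1\le k\le m$, and by Proposition \ref{Prop-3},
\begin{equation*}
|\f(uv)|=m+k(n-1).
\end{equation*}

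Next I would characterise the strong condition directly from this formula. The labeling $f$ is strong at the edge $uv$ iff $|\f(uv)|=mn$, i.e.\ iff $m+k(n-1)=mn$, which rearranges to $k(n-1)=m(n-1)$. Assuming $n\ge 2$ (which is the nontrivial case; if $n=1$ the vertex $v$ has a singleton set-label, for which a common ratio is not meaningfully defined, and both sides equal $m$ trivially), this is equivalent to $k=m=|f(u)|$, which is precisely the label size of the end vertex having the smaller common ratio. Doing this equivalence on every edge of $G$ yields the ``strong iff characteristic index equals smaller-ratio label size'' statement.

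Finally, I would package the argument in the standard two directions: for the ``only if'' direction, assume $f$ is strong and read off $k=m$ from the above equation for every edge; for the ``if'' direction, substitute $k=m$ into Proposition \ref{Prop-3} and observe $|\f(uv)|=m+m(n-1)=mn=|f(u)||f(v)|$, so the upper bound in \eqref{eqn-1} is attained on every edge. The only minor care point, which I would mention explicitly, is the degenerate situation where an end vertex has label size one; beyond this cosmetic comment there is no real obstacle, as the whole argument is just rearranging the identity $m+k(n-1)=mn$.
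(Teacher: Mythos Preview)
Your proposal is correct and follows essentially the same route as the paper: both directions are obtained by invoking Proposition~\ref{Prop-3} and equating $m+k(n-1)$ with $mn$, then solving for $k$. The only difference is that you explicitly flag the degenerate case $n=1$, which the paper silently ignores.
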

\begin{proof}

Let $f$ be a geometric product set-labeling of $G$ and let $u$ and $v$ be two adjacent vertices of $G$ with common ratios $r_u$ and $r_v$ respectively such that $r_u\le r_v$.  

First, assume that $f$ is also a strong product set-labeling of $G$. Then, we have $|\f(uv)|= |f(u)|\,|f(v)|\ \forall\, uv\in E(G)$. But, by Proposition \ref{Prop-3}, we have $|\f(uv)|=|f(u)|+k\left(|f(v)|-1\right)$. Therefore, from the above two equations, we get
\begin{eqnarray*}
|\f(uv)|& = & |f(u)|+k\left(|f(v)|-1\right)\\
i.e, ~~ |f(u)|\,|f(v)|& = & |f(u)|+k\left(|f(v)|-1\right)\\
\therefore  ~~  k & = & \frac{|f(u)|\,|f(v)|-|f(u)|}{\left(|f(v)|-1\right)}\\ 
& = & |f(u)|.
\end{eqnarray*} 

Conversely assume that the characteristic index of every edge of $G$ is equal to the label size of its end vertex having smaller common ratio. That is, let $k=|f(u)|$. Then, we have 

\vspace{-0.75cm}

\begin{eqnarray*}
|\f(uv)|& = & |f(u)|+k\left(|f(v)|-1\right) \text{for all}\ uv\in E(G)\\
& = & |f(u)|+|f(u)|\left(|f(v)|-1\right)\\
& = & |f(u)|\,|f(v)|
\end{eqnarray*}
Therefore, $f$ is a strong product set-labeling of $G$. This completes the proof.
\end{proof}

When the set-labels of two adjacent vertices are geometric progressions with the same common ratio, then the characteristic index of the edge between them is $1$. Invoking this fact, we define a particular type of geometric product set-labeling as follows.

\begin{defn}{\rm 
An \textit{isogeometric product set-labeling} of a graph $G$ is a product set-labeling of $G$ with respect to which the set-labels of all elements of $G$ are geometric progressions with the same common ratio. } 
\end{defn} 

In view of Theorem \ref{Thm-2}, we note that for any graph $G$ which admits an isogeometric product set-labeling, the label size of every edge is one less than the sum of the label sizes of its end vertices and also note that the characteristic index of every edge of $G$ is $1$.

The following is an obvious result on the admissibility of an isogeometric product set-labeling by any given graph.

\begin{thm}\label{Thm-6}
Every graph admits an isogeometric product set-labeling.
\end{thm}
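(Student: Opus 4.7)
My plan is to exhibit, for an arbitrary graph $G$, an explicit labeling that meets the definition of an isogeometric product set-labeling. Fix any integer $r\ge 2$ to serve as the common ratio, enumerate $V(G)=\{v_1,v_2,\dots,v_n\}$, and assign to $v_i$ a geometric progression of common ratio $r$ whose starting term is chosen to force injectivity. A convenient choice is $f(v_i)=\{r^i,r^{i+1}\}$, a two-term GP with ratio $r$; the singleton variant $f(v_i)=\{r^i\}$ would also work if one wishes to treat singletons as trivially geometric.

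Next I would verify the two defining properties. Injectivity of $f$ is immediate because the sets $\{r^i,r^{i+1}\}$ are pairwise distinct as $i$ varies (their minima differ). For the edge labels, take any edge $uv$ with $f(u)=\{r^a,r^{a+1},\ldots,r^{a+s-1}\}$ and $f(v)=\{r^b,r^{b+1},\ldots,r^{b+t-1}\}$. The product set consists of the elements $r^{(a+i)+(b+j)}$ with $0\le i\le s-1$ and $0\le j\le t-1$; since the exponents $(a+i)+(b+j)$ sweep out every integer in the interval $[a+b,\,a+b+s+t-2]$, the product set equals $\{r^{a+b+\ell}:0\le\ell\le s+t-2\}$, a geometric progression of common ratio $r$ and length $s+t-1$. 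In my construction $s=t=2$, so every edge receives a three-term GP with common ratio $r$, and all set-labels (vertex and edge) are GPs with the same common ratio $r$, making $f$ isogeometric.

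I do not anticipate a real obstacle: the theorem is flagged as ``obvious'' in the excerpt, and the only substantive content is the closure property that the product of two GPs sharing a common ratio is again a GP with that ratio. This is precisely the cardinality-attaining case of Theorem \ref{Thm-2}, and it is also the $k=1$ instance of the analysis inside the proof of Theorem \ref{Thm-3}, so no new calculation is required beyond invoking what has already been established.
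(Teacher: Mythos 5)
Your proof is correct and takes essentially the same route as the paper: assign every vertex a geometric progression with one fixed common ratio $r$ (distinct starting terms guaranteeing injectivity of $f$) and verify directly that each edge's product set is again a geometric progression with ratio $r$, since the exponents fill a consecutive interval. The paper's construction simply allows arbitrary starting terms $a_i$ and arbitrary lengths $m_i$, i.e.\ $f(v_i)=\{a_i,a_ir,\ldots,a_ir^{m_i-1}\}$, where you specialize to the two-term progressions $\{r^i,r^{i+1}\}$; the verification is the same computation.
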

\begin{proof}
Let $V=\{v_1,v_2,v_3,\ldots,v_n\}$ be the vertex set of the given graph $G$. Choose two sets  $A=\{a_i\in \N : 1\le i\le |V|=n\}$ and $B=\{m_i\in \N: 1\le i\le n\}$. Now label vertices of $G$ by the geometric progression $f(v_i)=\{a_i,a_ir, a_ir^2,\ldots,a_ir^{m_i-1}\}$; $1\le i\le n$, where $r$ is a positive integer greater than $1$. Then, the set-label of any edge $v_iv_j$ in $G$ is given by $\f(v_iv_j)=\{a_ia_j, a_ia_jr, a_ia_jr^2,\ldots,a_ia_jr^{m_i+m_j-2}\}$, which is also a geometric progression with the common ratio $r$. That is, the set-label of all elements of $G$ are geometric progressions with the same common ratio $r$. Hence, $f$ is an isogeometric product set-labeling of $G$.  
\end{proof}

In the following theorem, we discuss the condition required for an isogeometric product set-labeling of a graph $G$ to be a uniform product set-labeling of $G$.

\begin{thm}\label{Thm-7}
An isogeometric product set-labeling of a connected graph $G$ is a uniform product set-labeling if and only if any one of the following conditions holds.
\begin{enumerate}\itemsep0mm
	\item[(i)] the label size of all vertices of $G$ are equal.
	\item[(ii)] $G$ is bipartite with label size of vertices in the same partition are equal.
\end{enumerate}
\end{thm}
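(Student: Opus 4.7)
The plan is to translate the conclusion into a condition on label sizes. Writing $s(v) := |f(v)|$, the remark immediately preceding the theorem (a consequence of Theorem \ref{Thm-2}) tells us that under any isogeometric product set-labeling $f$, every edge $uv$ satisfies $|\f(uv)| = s(u) + s(v) - 1$. Hence $f$ is uniform if and only if there is a constant $c$ with $s(u) + s(v) = c$ for every edge $uv \in E(G)$. The task therefore reduces to classifying the connected graphs that admit such a vertex weighting, and then observing that an isogeometric labeling realizing any prescribed label-size pattern can always be constructed as in the proof of Theorem \ref{Thm-6}.

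Sufficiency is immediate. In case (i), $s$ is the constant $c/2$, so $s(u)+s(v)=c$ trivially. In case (ii), every edge of a bipartite graph crosses the bipartition, so $s(u)+s(v)= s_1 + s_2$, which is a constant we can call $c$.

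For necessity, I would fix a vertex $v_0$ and argue by induction on distance: every neighbour $u$ of $v_0$ satisfies $s(u)=c-s(v_0)$, every neighbour of such a $u$ satisfies $s=s(v_0)$, and so on. Connectedness then forces $s(w)=s(v_0)$ whenever $w$ lies at even distance from $v_0$ and $s(w)=c-s(v_0)$ whenever $w$ lies at odd distance, \emph{provided} parity is well defined at each vertex. If $s(v_0)=c-s(v_0)$, then $s$ is globally constant and case (i) holds. Otherwise $s$ takes exactly two distinct values, and the two level sets $V_1=\{w:s(w)=s(v_0)\}$ and $V_2=\{w:s(w)=c-s(v_0)\}$ must form a bipartition: if some edge $uv$ had both endpoints in the same level set, then $2s(u)=c$ would force the two values to coincide, a contradiction. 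Thus $G$ is bipartite with constant label size on each part, giving case (ii).

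The main obstacle is this last step of the necessity argument: confirming that in the two-valued situation the level sets of $s$ genuinely partition $V(G)$ into independent sets, i.e.\ that $G$ has no odd cycle. The argument above handles this directly by observing that any monochromatic edge would collapse the two values into one. Once parity of distance from $v_0$ is seen to be well defined, the claimed dichotomy follows and the proof is complete.
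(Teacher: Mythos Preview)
Your argument is correct and follows essentially the same route as the paper: both reduce uniformity to the condition that $s(u)+s(v)$ is constant along every edge (using $|\f(uv)|=|f(u)|+|f(v)|-1$), then propagate label sizes outward from a base vertex via connectedness to obtain either a single value or an alternating two-value pattern forcing bipartiteness. Your formulation via level sets and the ``monochromatic edge $\Rightarrow 2s(u)=c$'' contradiction is exactly the paper's observation that ``since $m\ne n$, no two vertices in the same partition are adjacent,'' just stated more crisply; the remark about realizing prescribed size patterns via Theorem~\ref{Thm-6} is unnecessary here since the statement concerns a \emph{given} labeling $f$, but it does no harm.
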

\begin{proof}
Let $f$ be an isogeometric product set-labeling of a given graph $G$. If $|f(v)|=m$, a positive integer $m$ for all $v\in V(G)$, then all edges of $G$ has the label size $2m-1$. If there exist some vertices in $V(G)$ such that $|f(v)|\ne m$, then assume that $G$ is a bipartite graph with bipartition $(X,Y)$ such that all vertices in $X$ have the label size $m$ and all vertices in $Y$ have the label size $n$. Here, by Proposition \ref{Prop-3}, all edges of $G$ have the label size $m+n-1$. In both cases, $f$ is a uniform product set-labeling of $G$. 

Conversely, assume that the isogeometric product set-labeling $f$ is also a uniform product set-labeling of $G$. If the label size of all vertices of $G$ are equal, then the proof is complete. Hence assume otherwise. Let $u$ be an arbitrary vertex of $G$ which has the label size $m$. Since $f$ is a uniform geometric product set-labeling, all vertices $v$ in the neighbouring set $N(u)$ of the vertex $u$ must have the same label size, say $n$. Using the same argument, all vertices in the neighbouring set of $N(u)$ must have the label size $m$. Since $G$ is a connected graph, the vertex set $V(G)$ can be partitioned in to two sets such that the vertices in the first partition have the label size $m$ the vertices in the other partition have the label size $n$. Since $m\ne n$, no two vertices in the same partition are adjacent also. Hence, $G$ is a bipartite graph with the vertices in the same partition have same label size.
\end{proof}

We have already noticed that the characteristic index of all edges of a graph which admits an isogeometric product set-labeling is $1$. But, In general, the characteristic indices of all edges of a geometric product set-labeled graph need not be the same. This fact creates a lot of interest in studying the structural properties of a geometric product set-labeled graph, all whose edges have the same characteristic index greater than $1$. Hence we have the following notion. 

\begin{defn}{\rm 
A geometric product set-labeling of a  graph $G$ is said to be a \textit{like-geometric product set-labeling} if all edges have the same characteristic index $k>1$. }
\end{defn}

The following theorem discusses a necessary and sufficient condition for a graph $G$ to admit a like-geometric product set-labeling.

\begin{thm}\label{Thm-5}
A graph $G$ admits a like-geometric product set-labeling if and only if it is bipartite.
\end{thm}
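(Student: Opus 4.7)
The plan is to prove the biconditional in two directions, using the common ratios $r_v>1$ of the vertex set-labels as the main invariant.

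For the necessity (like-geometric implies bipartite), I would start from a labeling with characteristic index $k>1$ at every edge. By definition, each edge $uv$ satisfies $r_v=r_u^{\,k}$ or $r_u=r_v^{\,k}$, so traversing any edge multiplies the positive real number $\log r_{\bullet}$ by $k^{+1}$ or $k^{-1}$. Picking an arbitrary cycle $v_0 v_1 \cdots v_{n-1} v_0$, I would let $\epsilon_i=+1$ if $\log r_{v_{i+1}}=k\log r_{v_i}$ and $\epsilon_i=-1$ otherwise; closing the cycle forces $k^{\sum_i\epsilon_i}=1$, and since $k>1$ this gives $\sum_i\epsilon_i=0$. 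Thus $n$ has as many $+1$-steps as $-1$-steps and is therefore even, so $G$ has no odd cycles and is bipartite.

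For the sufficiency (bipartite implies like-geometric), I would give an explicit construction on a graph $G$ with bipartition $(X,Y)$. Fix any integer $k>1$, a prime $p$, and a system of distinct primes $\{a_v : v\in V(G)\}$ all different from $p$, and set
\[ f(v)=\{a_v p^{\,j}:0\le j\le k\} \text{ for } v\in X,\qquad f(v)=\{a_v,\ a_v p^{\,k}\}\text{ for } v\in Y. \]
Unique factorization together with the distinctness of the $a_v$ makes $f$ injective. Every edge joins $X$ to $Y$, so its smaller common ratio is $p$ and its larger common ratio is $p^{\,k}$; the $X$-endpoint has label size $k+1\ge k$, so the hypothesis of Theorem \ref{Thm-3} is met and $\f(uv)$ is a geometric progression. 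Hence $f$ is a geometric product set-labeling in which every edge has characteristic index $k$, i.e., a like-geometric product set-labeling.

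The step I expect to require the most care is the necessity direction, where the multiplicative constraints $r_v=r_u^{\,k}$ or $r_u=r_v^{\,k}$ must be turned into an additive cancellation around a cycle before the parity argument can be invoked. The sufficiency direction, by contrast, is essentially mechanical once Theorem \ref{Thm-3} is available.
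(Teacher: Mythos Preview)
Your proof is correct and departs from the paper's in both directions. For sufficiency, the paper's construction is similar in spirit---ratio $r$ on one side of the bipartition, $r^k$ on the other---but leaves injectivity implicit and allows arbitrary label sizes, taking $k$ to be the minimum label size on the $X$-side; your use of distinct primes $a_v$ makes injectivity transparent at the cost of a more rigid construction. The substantive difference is in the necessity direction. The paper argues by contrapositive: given an odd cycle, it alternates ratios $r$ and $r^k$ around it and observes that the closing edge is forced to have characteristic index $1$, then asserts informally that ``in all other labeling\ldots at least one edge of $C_n$ has the characteristic index $k^q$ for some $q\ne1$.'' Your argument is tighter and avoids that case analysis entirely: from the edge relation $r_v=r_u^{\,k}$ or $r_u=r_v^{\,k}$ you extract a $\pm1$-weight on each edge via $\log r_{\bullet}$, and the closure condition $k^{\sum_i\epsilon_i}=1$ around any cycle (using $k>1$ and $\log r_{v_0}\ne0$) forces $\sum_i\epsilon_i=0$, hence even length. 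This makes the obstruction explicit and global rather than relying on an exhaustive local check.
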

\begin{proof}
First, assume that $G$ is a bipartite graph with a bipartition $(X,Y)$. Let $X=\{v_i: 1\le i\le m\}$ and $Y=\{u_j: 1\le j\le n\}$, where $m,n \in \N$.  Choose the sets $M_1=\{m_i\in \N :m_i\ge 2, 1\le i\le |X|=m\}$, $M_2=\{a_i\in \N : 1\le i\le m\}$, $N_1=\{n_j\in \N : n_j\ge 2, 1\le j\le |Y|=n\}$ and $N_2=\{b_j\in \N : 1\le j\le n\}$. Let $k=\min\{m_i:1\le i\le m\}$ and choose two positive integers $r$ and $s$ such that $s=r^k$. Now, define a product set-labeling $f$ on $G$ which assigns each vertex $v_i$ of $X$ to a geometric progression $f(v_i)=\{a_i,a_ir,a_ir^2,\ldots, a_ir^{m_i-1}\}; 1\le i\le |X|$ and each vertex $u_j$ of $Y$ to a geometric progression $f(u_j)=\{b_j,b_js,b_js^2,\ldots, b_js^{n_j-1}\}; 1\le j\le |Y|$. Then, by Theorem \ref{Thm-3}, for every edge $v_iu_j$ in $G$, if exists, the set-label $\f(v_iu_j)$ is a geometric progression with common ratio $r$ and the characteristic index of every edge in $G$ will be $k$. Hence, the function $f$ is a like-geometric product set-labeling of $G$.

Next, assume that $G$ is not a bipartite graph. Then, $G$ contains at least one odd cycle. Let $C_n$ be such an odd cycle in $G$. Now, choose two positive integers $r$ and $s$ such that $s=r^k$. Label the vertices of $C_n$ with odd subscripts by distinct geometric progressions with common ratio $r$ and label the vertices with even subscripts by distinct geometric progressions with common ratio $s$. Then, all edges except $v_nv_1$ attain the characteristic index $k$ and the edge $v_nv_1$ has the characteristic index $1$. In all other labeling of the vertices of $G$ with geometric progressions such that the maximum number of edges attains the characteristic index $k$, we can see that at least one edge of $C_n$ has the characteristic index $k^q$, for some positive integer $q\ne 1$. In all these cases, it is to be noted that $f$ is not a like-geometric product set-labeling of $G$. This completes the proof. 
\end{proof}

The following proposition provides the condition required for a like-geometric product set-labeling of a graph $G$ to be a uniform product set-labeling of $G$.

\begin{thm}\label{Thm-8}
A like-geometric product set-labeling of a (bipartite) graph $G$ is a uniform product set-labeling of $G$ if and only if the vertices in the same partition of $G$ have the same label size.
\end{thm}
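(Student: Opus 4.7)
The plan is to invoke Proposition \ref{Prop-3} to convert the edge label size into a linear expression in the vertex label sizes, and then exploit the fixed characteristic index $k$ of a like-geometric set-labeling together with bipartiteness (guaranteed by Theorem \ref{Thm-5}). Throughout, write $(X,Y)$ for the bipartition of $G$. As in the proof of Theorem \ref{Thm-5}, one may assume that every vertex of $X$ has common ratio $r$ and every vertex of $Y$ has common ratio $s=r^k$, so that for every edge $uv$ with $u\in X$ and $v\in Y$, the smaller common ratio sits at $u$. Proposition \ref{Prop-3} then gives
\[
|\f(uv)| \;=\; |f(u)| + k\bigl(|f(v)|-1\bigr).
\]

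The forward (``if'') direction will be a direct substitution. Assuming that every vertex in $X$ has label size $m$ and every vertex in $Y$ has label size $n$, the displayed formula immediately yields $|\f(uv)|=m+k(n-1)$ for every edge, so $f$ is a uniform product set-labeling.

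For the converse, suppose $f$ is uniform, with edge label size $L$ on every edge. Fix any $v\in Y$ and let $u,u'\in X$ be two neighbours of $v$. Applying the formula to the edges $uv$ and $u'v$ gives
\[
|f(u)|+k\bigl(|f(v)|-1\bigr) \;=\; L \;=\; |f(u')|+k\bigl(|f(v)|-1\bigr),
\]
hence $|f(u)|=|f(u')|$. The symmetric argument for a fixed $u\in X$ and two neighbours in $Y$ gives equal label sizes for those neighbours. I would then propagate these local equalities globally using connectedness of $G$: any two vertices of $X$ lying in the same component are joined by a path of even length, whose consecutive $X$-vertices share a $Y$-neighbour, so all $X$-vertices have a common label size $m$; similarly all $Y$-vertices share a label size $n$.

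The only step that deserves care is the propagation argument, since the statement does not explicitly assume connectedness (Theorem \ref{Thm-7} does). I would handle this by either assuming $G$ is connected (consistent with the convention in the earlier uniformity theorem) or applying the component-wise argument and noting that the relation $L=m+k(n-1)$, together with the conclusion per component, is what the statement intends by ``vertices in the same partition have the same label size.'' No other step is delicate: once Proposition \ref{Prop-3} is in hand and the common ratios are placed on the two sides of the bipartition via Theorem \ref{Thm-5}, everything reduces to equating two affine expressions and walking along a path.
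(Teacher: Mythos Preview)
Your proposal is correct and follows essentially the same route as the paper: both directions invoke Proposition~\ref{Prop-3} for the formula $|\f(uv)|=|f(u)|+k(|f(v)|-1)$, the ``if'' direction is a direct substitution, and the ``only if'' direction propagates equal label sizes through neighbourhoods using connectedness (the paper simply cites the converse argument of Theorem~\ref{Thm-7}). Your explicit flagging of the unstated connectedness hypothesis is apt; the paper's proof quietly assumes $G$ is connected in its first line.
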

\begin{proof}
Let $f$ be a like-geometric product set-labeling of a connected graph $G$. Then, by Theorem \ref{Thm-5}, $G$ is bipartite. Let $(X,Y)$ be a bipartition of $G$. 

First, let all vertices in $X$ have the same label size, say $m$ and all vertices in $Y$ have the same label size, say $n$. Then, by Proposition \ref{Prop-3}, the label size of all edges of $G$ is $m+k(n-1)$, where $k\le n$. Hence, $f$ is a uniform product set-labeling of $G$. 

Now, assume that $f$ is also a uniform product set-labeling of $G$. Then, exactly as explained in the converse part of the proof of Theorem \ref{Thm-7}, we can partition the vertex set of $G$ in two subsets $X$ and $Y$ such that all vertices in $X$ have the same label size, say $m$ and all vertices in $Y$ have the same set-label, say $n$ and such that no two vertices in the same partition are adjacent to each other. This completes the proof.   
\end{proof}

A necessary and sufficient condition for a like-geometric product set-labeling of a graph $G$ to be a strong product set-labeling of $G$ is provided in the following result.

\begin{thm}\label{Thm-9}
A like-geometric product set-labeling $f$ of a graph $G$ is a strong product set-labeling of $G$ if and only if all vertices in one partition have the same label size. 
\end{thm}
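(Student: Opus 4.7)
The plan is to derive the equivalence by combining Theorem \ref{Thm-5} (a like-geometric product set-labeling forces $G$ to be bipartite) with Theorem \ref{Thm-4}, which characterises strongness of a geometric product set-labeling by the condition that the characteristic index of every edge equals the label size of its end vertex having the smaller common ratio.

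First I would set up the bipartition. Since $f$ is like-geometric, Theorem \ref{Thm-5} supplies a bipartition $(X,Y)$, and the fact that the common characteristic index $k>1$ is shared by every edge forces all vertices in one part, say $X$, to have a single common ratio $r$ and all vertices in the other part $Y$ to have common ratio $r^{k}$. Thus $X$ is uniformly the smaller-common-ratio end of every edge of $G$.

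For the forward implication, I would assume $f$ is also strong and apply Theorem \ref{Thm-4} edge by edge: for each edge $xy$ with $x\in X$, the identity $|f(x)|=k$ must hold. Because $G$ has no isolated vertex, every $x\in X$ participates in at least one edge, so $|f(x)|=k$ for all $x\in X$, and $X$ is the partition in which all vertices share a common label size. For the converse, I would observe that the partition with uniform label size must be $X$ (otherwise the forward argument applied to some $x\in X$ with the wrong label size contradicts Theorem \ref{Thm-4} on an incident edge) and that this common label size must equal $k$; then Theorem \ref{Thm-4} applies uniformly on every edge and yields the strong property.

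The main subtle point, more than a genuine obstacle, is tying the bare phrase ``same label size'' in the statement to the characteristic index $k$. Once one notices that the bipartition of a like-geometric labeling is polarised by common ratio and that Theorem \ref{Thm-4} constrains only the smaller-ratio side, this identification is forced, and both directions collapse to direct applications of Theorem \ref{Thm-4} on each edge, with no calculation beyond what Proposition \ref{Prop-3} and Theorem \ref{Thm-4} already provide.
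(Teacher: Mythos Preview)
Your plan coincides with the paper's: use Theorem~\ref{Thm-5} to secure the bipartition and then invoke Theorem~\ref{Thm-4} edge by edge, so that strongness reduces to the equality $|f(x)|=k$ for every vertex $x$ on the smaller-common-ratio side. The paper's ``if'' direction is phrased constructively (it describes labeling $X$ with common ratio $r$ and $Y$ with $r^{m}$) whereas you reason analytically from the given $f$, and you are more explicit than the paper in tying the uniform label size to $k$; but the cited ingredients and the logical skeleton are the same.
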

\begin{proof}
Let $f$ be a like-geometric product set-labeling of $G$. Clearly, by Theorem \ref{Thm-5}, $G$ is bipartite. Let $(X,Y)$ be a bipartition of $G$. Without loss of generality, label all vertices in $X$ by distinct geometric progressions of same cardinality, say $m$ and the same common ratio $r$, where $r$ is any positive integer greater than $1$. Now label the vertices in $Y$ by distinct geometric progressions with common ratio $r^m$. Then, by Theorem \ref{Thm-4}, $|\f(uv)|=|f(u)|\,|f(v)|\ \forall\, uv\in E(G)$. Therefore, $f$ is a strong product set-labeling of $G$.

Conversely, assume that $f$ is a strong product set-labeling of $G$. Then, the characteristic index $k$ of every edge of $G$ is equal to the cardinality of the set-label of its end vertex having smaller common ratio. Since $f$ is a like-geometric product set-labeling, the characteristic index of every edge of $G$ is the same and is equal to the minimum label size of the vertices having smaller common ratio. Hence, the label size of all vertices in the corresponding partition are the same. This completes graph.
\end{proof}

In view of the above two theorems, we have the following result.

\begin{cor}\label{Cor-6}
A like-geometric product set-labeling $f$ of a graph $G$ is a strongly uniform product set-labeling of $G$ if and only if all vertices in the same partition have the same label size. 
\end{cor}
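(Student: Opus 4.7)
The plan is to derive Corollary \ref{Cor-6} as a direct consequence of Theorems \ref{Thm-8} and \ref{Thm-9}, since being \emph{strongly uniform} simply means being both strong and uniform. Because any graph admitting a like-geometric product set-labeling is bipartite (by Theorem \ref{Thm-5}), we may fix a bipartition $(X,Y)$ of $G$ at the outset and argue about label sizes within each part.

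For the forward direction, I would assume that $f$ is strongly uniform. The uniformity of $f$, together with Theorem \ref{Thm-8}, immediately yields that all vertices of $X$ share a common label size $m$ and all vertices of $Y$ share a common label size $n$; this already gives the claimed conclusion that vertices in the same partition have the same label size. (The strength of $f$ is not even needed here, but it is consistent: by Theorem \ref{Thm-9} it forces the common characteristic index $k$ to equal the label size $m$ of the partition with smaller common ratio, which is well-defined thanks to the uniformity within $X$.)

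For the converse, I would assume that all vertices in the same partition have the same label size, say $m$ on $X$ and $n$ on $Y$. Theorem \ref{Thm-8} then gives that $f$ is uniform. For strength, since at least one of the two partitions (in fact both) has all its vertices of the same label size, Theorem \ref{Thm-9} applies and yields that $f$ is strong. Combining, $f$ is strongly uniform.

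There is no real obstacle here; the only point that requires a moment of care is matching the wording of the two theorems: Theorem \ref{Thm-9} phrases its condition as equality of label sizes in \emph{one} partition, whereas Theorem \ref{Thm-8} requires it in \emph{both}. Thus the hypothesis of the corollary is exactly the conjunction needed, and the equivalence goes through cleanly by invoking the two earlier theorems in turn.
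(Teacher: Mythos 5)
Your proposal is correct and follows essentially the same route as the paper: the corollary is obtained by combining Theorem \ref{Thm-8} (uniformity $\Leftrightarrow$ equal label sizes within each part) with Theorem \ref{Thm-9} (strength $\Leftrightarrow$ equal label sizes in the relevant part), the paper merely adding the remark that this corresponds to taking the characteristic index $k=m$ in those proofs. Your parenthetical observation that strength forces $k$ to equal the label size of the partition with the smaller common ratio is exactly that same point, so nothing further is needed.
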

\begin{proof}
The proof immediately follows from Theorem \ref{Thm-8} and Theorem \ref{Thm-9}, by taking the value $k=m$ in the respective proofs.  
\end{proof}

\section{Conclusion}

In this paper, we have discussed the characteristics and properties of the graphs which admit different types of product set-labeling. There are several open problems in this area. Some of the open problems that seem to be promising for further investigations are following.

\begin{prob}{\rm 
Characterise the product set-labeled graphs whose vertex set-labels are geometric progressions but the edge set-labels are not.}
\end{prob}

\begin{prob}{\rm 
Characterise the product set-labeled graphs whose edge set-labels are geometric progressions but the vertex set-labels are not.}
\end{prob}

\begin{prob}{\rm 
Discuss the conditions required for an arbitrary geometric product set-labeling of a graph to be a uniform product set-labeling of $G$.}
\end{prob}

\begin{prob}{\rm 
Discuss the admissibility of different types of product set-labelings by different graph operations, graph products and graph powers. }
\end{prob}

\begin{prob}{\rm 
Characterise the product set-labeled graphs in which the label size of its edges are equal to the label sizes of one or both of their end vertices. }
\end{prob}

Further studies on other characteristics of product set-labeled graphs corresponding to different types of product set-labelings are also interesting and challenging. All these facts highlight the scope for further studies in this area. 

\section*{Acknowledgement}

The author would like to dedicate this work to Prof. (Dr.) T. Thrivikraman, who has been his mentor, motivator and the role model in teaching as well as in research.

\end{document}